\documentclass[11pt]{article}
\usepackage[margin=1.2in]{geometry}
\usepackage{amsmath,amssymb,amsthm}
\usepackage[colorlinks=true,linkcolor=blue,citecolor=blue,urlcolor=blue]{hyperref}

\newtheorem{theorem}{Theorem}
\newtheorem{lemma}[theorem]{Lemma}
\newtheorem{corollary}[theorem]{Corollary}
\newtheorem{proposition}[theorem]{Proposition}
\theoremstyle{remark}
\newtheorem{remark}[theorem]{Remark}

\newcommand{\A}{\mathcal{A}}

\DeclareMathOperator{\lcm}{lcm}
\DeclareMathOperator{\dens}{dens}

\title{The density of sums of distinct divisors}
\author{Scott D. Hughes}
\date{\today}

\begin{document}
\maketitle

\begin{abstract}
For a positive integer $t$, let $d_t$ denote the natural density of the set of $n$ for which
$t$ is a sum of distinct divisors of $n$. Erd\H{o}s proved that $d_t$ exists, gave an
unspecified polylogarithmic upper bound, asserted without proof a matching lower bound, and
asked whether $d_t \sim c_3/(\log t)^{c_4}$. We record the explicit bounds
\[
  \nu_t \;\le\; d_t \;\ll\; \frac{(\log\log t)^{\delta-3/2}}{(\log t)^{\delta}},
  \qquad
  \nu_t = \frac{K}{\log t}\Bigl(1 + O\Bigl(\frac{\log\log t}{\log t}\Bigr)\Bigr),
\]
where $\delta = 0.086071\ldots$ is the Erd\H{o}s--Ford--Tenenbaum constant,
$K=c\,e^{-\gamma}$, and $c=1.33607\ldots$ is the practical-number constant. Consequently, if
Erd\H{o}s's asymptotic holds, then $\delta<c_4\le 1$. A two-prime construction, using a
half-scale sumset to obtain full residue coverage, then yields the pointwise excess
\[
  \liminf_{t\to\infty}(\log t)\,(d_t-\nu_t) \;\ge\; KI,
\]
where $I=\int_0^2 G(w)\,\mathrm{d}w=0.05887\ldots$ is an explicit elementary integral. In
particular $d_t\ge 0.79/\log t$ for every sufficiently large $t$, and $d_t$ is not
asymptotic to $K/\log t$.
\end{abstract}

\section{Introduction}\label{sec:intro}

\paragraph{Conventions.} Throughout, $t$, $n$, $m$ denote positive integers, $p$, $q$ primes,
and $\log$ the natural logarithm. We write $P^-(n)$ and $P^+(n)$ for the smallest and largest
prime factors of $n$, with $P^-(1)=\infty$ and $P^+(1)=1$, and $\sigma$, $\tau$ for the sum
and number of divisors; $\gamma$ is Euler's constant.

Say $t$ is \emph{representable from} $n$ if $t=\sum_{d\in S}d$ for a set $S$ of distinct
divisors of $n$, and let
\[
  \A_t := \{n : t \text{ is representable from } n\}, \qquad
  d_t := \text{the natural density of }\A_t,
\]
which exists and is rational for every $t$ (Lemma~\ref{lem:periodic};
cf.\ \cite[p.~130]{Er70}). Erd\H{o}s \cite[p.~130]{Er70} proved $d_t<(\log t)^{-c_1}$ for an
unspecified $c_1>0$, stated without proof a lower bound of the same shape, and asked (his
eq.~(33)) whether
\begin{equation}\label{eq:erdos-conj}
  d_t = \frac{(1+o(1))\,c_3}{(\log t)^{c_4}}
\end{equation}
holds, adding that this ``if true may not be quite easy to prove.'' The problem is recorded
as Erd\H{o}s Problem \#859 \cite{Bl859}.

\begin{remark}[a notation clash with the problem list]\label{rem:notation}
We follow Erd\H{o}s's own numbering from \cite[p.~130]{Er70}: $c_1,c_2$ are the exponents in
his two-sided bounds, and $c_3,c_4$ are the constant and the exponent in the conjectured
asymptotic \eqref{eq:erdos-conj}. The problem list \cite{Bl859} uses the two pairs in the
opposite roles, writing $d_t\sim c_1/(\log t)^{c_2}$ for the asymptotic and
$(\log t)^{-c_3}<d_t<(\log t)^{-c_4}$ for the bounds. In particular our $c_4$ is the $c_2$ of
\cite{Bl859}.
\end{remark}

The order-of-magnitude lower bound $d_t\gg 1/\log t$ is an immediate consequence of Pollack
and Thompson's theorem on practical pretenders \cite{PoTh13}. The first two results make both
sides explicit. Write $K=c\,e^{-\gamma}$, where $c=1.33607\ldots$ is the practical-number
constant \cite{We15,We20}.

\begin{theorem}\label{thm:lower}
Let $f(n)$ be the largest nonnegative integer $Y$ such that every integer in $[1,Y]$ is a
sum of distinct divisors of $n$, and let $\nu_t$ denote the density of
$\{n:f(n)\ge t\}$. Then $d_t\ge\nu_t$ for every $t$, and, as $t\to\infty$,
\begin{equation}\label{eq:lower}
  \nu_t \;=\; \frac{K}{\log t}
  \left(1 + O\!\left(\frac{\log\log t}{\log t}\right)\right).
\end{equation}
In particular $\liminf_{t\to\infty}\, d_t\log t \;\ge\; K \;=\; 0.75015\ldots$.
\end{theorem}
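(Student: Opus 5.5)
The inequality $d_t\ge\nu_t$ is immediate: if $f(n)\ge t$ then $t\in[1,f(n)]$ is a sum of distinct divisors of $n$, so $\{n:f(n)\ge t\}\subseteq\A_t$. The set on the left also has a density. By Lemma~\ref{lem:periodic}-type reasoning, the property $f(n)\ge t$ depends only on the divisors of $n$ up to $t$, hence only on $\gcd(n,L)$ for $L=\lcm(1,\dots,t)$, so it is periodic. All the work is in the asymptotic \eqref{eq:lower}.

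To get the asymptotic I would use the structure of $f$ from the theory of practical numbers (Stewart--Sierpi\'nski). Write $n=p_1^{a_1}\cdots p_k^{a_k}$ with $p_1<\dots<p_k$, and set $m_j=p_1^{a_1}\cdots p_j^{a_j}$. If $m_{j}$ is practical, then $f(m_j)=\sigma(m_j)$. Moreover, $f(n)\ge t$ holds exactly when there is an index $j$ such that $m_j$ is practical (equivalently, $p_{i+1}\le\sigma(m_i)+1$ for all $i<j$) and $\sigma(m_j)\ge t$, or else the run of ``good'' primes stops with $\sigma(m_j)\ge t$. More precisely, let $j$ be the first index with $p_{j+1}>\sigma(m_j)+1$. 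Then $f(n)=\sigma(m_j)$ (the gap at $\sigma(m_j)+1$ cannot be filled, since all remaining divisors exceed it). So
\[
\{n: f(n)\ge t\}=\{n:\ \exists\, \text{practical } m\mid n,\ m \text{ a ``smooth-part'' of } n,\ \sigma(m)\ge t\},
\]
and the natural decomposition is to write $n=mr$, where $m$ is the maximal practical initial segment and $P^-(r)>\sigma(m)+1$. This gives
\[
\nu_t=\sum_{\substack{m\ \text{practical}\\ \sigma(m)\ge t}} \frac1m\prod_{p\le \sigma(m)+1}\Bigl(1-\frac1p\Bigr).
\]
This sum should be approached through the Weingartner machinery. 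Weingartner \cite{We15,We20} proves that the count of practical numbers up to $x$ is $cx/\log x\,(1+O(\log\log x/\log x))$. Equivalently, he shows the identity $\sum_{m \text{ practical}} \frac1m\prod_{p\le\sigma(m)+1}(1-1/p)=1$, together with a tail estimate. I would rewrite $\nu_t=1-\sum_{\sigma(m)<t}(\cdots)$ and use Mertens, $\prod_{p\le y}(1-1/p)=e^{-\gamma}/\log y\,(1+O(1/\log y))$, to express $\nu_t$ as $e^{-\gamma}\sum_{m \text{ practical},\,\sigma(m)\ge t}\frac{1}{m\log\sigma(m)}$ up to admissible error. I would then use $\sigma(m)/m\asymp\log\log m$ for practical $m$ to replace $\log\sigma(m)$ by $\log m+O(\log\log\log m)$, though this needs care near the threshold $\sigma(m)\approx t$. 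Next comes partial summation against the counting function $P(x)=cx/\log x\,(1+O(\log\log x/\log x))$. The main term is $\int_t^\infty c\,dx/(x\log^2x)=c/\log t$, giving $K/\log t$.

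The main obstacle is the threshold being on $\sigma(m)$ rather than on $m$: the set with $\sigma(m)\ge t$ but $m<t$ has ratio $\sigma(m)/m$ as large as $\log\log t$. Naively that shifts the main term by a factor $\log t/\log(t/\log\log t)=1+O(\log\log\log t/\log t)$, which is within the claimed error, so this should be harmless. The cleaner route, which I would try first, is to invoke Weingartner's result in the form of the density of integers $n$ whose maximal practical segment has $\sigma\ge t$. This is essentially his function $D(x,t)$-type estimate: he shows the density of $n$ with $f(n)\ge t$... In fact, Weingartner's theorem on ``$\theta$-dense'' integers with $\theta(n)=\sigma(n)+1$ directly gives that the density of $\{n:\text{the run reaches }t\}$ is $c e^{-\gamma}/\log t\,(1+O(\log\log t/\log t))$, via his relation between the counting function and this density. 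Then \eqref{eq:lower} follows, and $\liminf d_t\log t\ge K$ is immediate.
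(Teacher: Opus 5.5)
Your proposal is correct, and its closing suggestion is exactly the paper's proof. The paper cites \cite[Thm.~1(iv)]{We15pc}, $\#\{n\le x:f(n)\ge t\}=x\nu_t+O(2^t)$, for the existence of $\nu_t$ (so $d_t\ge\nu_t$ follows from the inclusion you give). It then cites the display after his eq.~(3) for \eqref{eq:lower}, with no further computation.

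Your detailed route is a genuine alternative. You get existence of $\nu_t$ elementarily, since $f(n)\ge t$ depends only on $\gcd(n,\lcm(1,\dots,t))$. You then derive \eqref{eq:lower} from the more basic count $P(x)=\frac{cx}{\log x}\bigl(1+O(\frac{\log\log x}{\log x})\bigr)$ of practical numbers, via Mertens and partial summation. This makes the origin of $K=c\,e^{-\gamma}$ transparent.

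Two points should be tightened.
\begin{itemize}
\item The tail formula $\nu_t=\sum_{\sigma(m)\ge t}\alpha_m$ does not follow from the decomposition $n=mr$ alone, because natural density is not countably additive. Derive it, as you later indicate, from the finite-complement formula $\nu_t=1-\sum_{\sigma(m)<t}\alpha_m$ together with $\sum_m\alpha_m=1$, i.e.\ $\nu_t\to0$. This also follows non-circularly from $\nu_t\le d_t$ and Theorem~\ref{thm:upper}.
\item The claim $\sigma(m)/m\asymp\log\log m$ is false for practical $m$ (e.g.\ $m=2^k$). Only $1\le\sigma(m)/m\ll\log\log m$ holds, which is all you use. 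Since $\alpha_m\ge0$, the inclusions $\{m\ge t\}\subseteq\{m:\sigma(m)\ge t\}\subseteq\{m\ge t/(C\log\log t)\}$ turn your threshold heuristic into a rigorous sandwich. It costs only a relative error $O(\log\log\log t/\log t)$, within the claimed bound.
\end{itemize}

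The paper's route buys brevity and an explicit remainder $O(2^t)$. Yours buys self-containedness, modulo the counting theorem for practical numbers.
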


\begin{theorem}\label{thm:upper}
As $t\to\infty$,
\[
  d_t \;\ll\; \frac{(\log\log t)^{\delta-3/2}}{(\log t)^{\delta}},
  \qquad
  \delta = 1 - \frac{1+\log\log 2}{\log 2} = 0.086071\ldots.
\]
In particular $d_t=o\bigl((\log t)^{-\delta}\bigr)$.
\end{theorem}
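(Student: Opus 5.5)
The plan is to reduce the statement to Ford's theorem on integers having a divisor in a prescribed interval. The reduction is a dichotomy: either $n$ has a divisor just below $t$, or the small divisors of $n$ have an unusually large sum. Fix $t$ large and put $L=\log t$ and $y=t/L$. Suppose $n\in\A_t$, say $t=\sum_{d\in S}d$ with $S$ a set of distinct divisors of $n$. Every element of $S$ is at most $t$. Hence either (i) $n$ has a divisor in $(y,t]$, or (ii) every element of $S$ is $\le y$, in which case
\[
  \Sigma_y(n):=\sum_{d\mid n,\ d\le y} d \;\ge\; t .
\]
So $\A_t\subseteq \mathcal{H}(y,t)\cup\mathcal{E}$, where $\mathcal{H}(y,t)$ is the set of $n$ with a divisor in $(y,t]$ and $\mathcal{E}=\{n:\Sigma_y(n)\ge t\}$. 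Since $d_t$ exists by Lemma~\ref{lem:periodic}, it suffices to bound the upper densities of these two sets.

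\emph{The exceptional set.} I would bound $\mathcal{E}$ by a first-moment (Markov) argument. For $x\ge y$,
\[
  \frac1x\sum_{n\le x}\Sigma_y(n)=\frac1x\sum_{d\le y} d\Big\lfloor\frac xd\Big\rfloor\le y .
\]
Hence the upper density of $\mathcal{E}$ is at most $y/t=1/\log t$. Because $\delta<1$, this is $o\bigl((\log\log t)^{\delta-3/2}(\log t)^{-\delta}\bigr)$ and is harmlessly absorbed.

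\emph{The main set.} Here I would invoke Ford's theorem on $H(x,y,z)=\#\{n\le x:\exists\, d\mid n,\ y<d\le z\}$. Write $z=t=y^{1+u}$. Then $u=\log L/\log y=(1+o(1))\log\log t/\log t$, which lies in Ford's range where $u$ is small but $y^u=\log t\to\infty$. In that range Ford gives, uniformly for $x\ge z$,
\[
  \frac{H(x,y,z)}{x}\asymp u^{\delta}\Bigl(\log\frac2u\Bigr)^{-3/2}.
\]
Substituting the value of $u$ yields
\[
  \asymp\Bigl(\frac{\log\log t}{\log t}\Bigr)^{\delta}(\log\log t)^{-3/2}=\frac{(\log\log t)^{\delta-3/2}}{(\log t)^{\delta}} .
\]
Letting $x\to\infty$ shows that the upper density of $\mathcal{H}(y,t)$ obeys the same bound, and adding the two contributions gives the theorem.

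\emph{Main obstacle.} The only delicate point is to apply Ford's result in exactly the right parametrization and check that it is uniform in this range. Ford's result is stated for $z=y^{1+u}$ with $y\to\infty$ and $0<u\le1$ (more precisely, $z=y+y/(\log y)^{o(1)}$ up to $z\le y^2$ are covered). So I must confirm that $z/y=\log t$ falls in the regime where the formula $u^\delta(\log 2/u)^{-3/2}$ holds, rather than the regime $z-y$ small. The choice $L=\log t$ is what produces the extra factor $(\log\log t)^{\delta}$ relative to the interval $(t/2,t]$. Any $L$ between a power of $\log t$ and $(\log t)^{\delta}$ would balance the two terms equally well up to constants, but $L=\log t$ suffices.
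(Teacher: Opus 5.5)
Your proposal is correct and follows the paper's proof: the same split into a representation using a divisor in $(y,t]$ versus one using only divisors $\le y$, Ford's Theorem~1(v) for the window, and the first-moment bound $\sum_{n\le x}\Sigma_y(n)\le xy$ for the remaining case. The only difference is that you take $y=t/\log t$ instead of the paper's $t/(\log t)^2$. This changes $u$ only by a constant factor, and the exceptional term $1/\log t$ is still negligible because $\delta<1$. The hypotheses of Ford's theorem actually needed are just $y\ge 100$, $2y\le z\le y^2$ and $y\le\sqrt{x}$, all of which hold for large $t$ and $x$; your discussion of the range under ``Main obstacle'' is muddled but not needed.
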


\begin{corollary}\label{cor:exponent}
If \eqref{eq:erdos-conj} holds with constants $c_3>0$ and $c_4$, then $\delta<c_4\le 1$;
and if $c_4=1$, then $c_3\ge K$.
\end{corollary}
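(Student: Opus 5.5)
The corollary follows by comparing the assumed asymptotic \eqref{eq:erdos-conj} with Theorems~\ref{thm:lower} and~\ref{thm:upper}, so the plan is to check each inequality separately. Throughout, assume $d_t=(1+o(1))c_3(\log t)^{-c_4}$ with $c_3>0$. Then $d_t\log t=(1+o(1))c_3(\log t)^{1-c_4}$, and also $d_t(\log t)^{\delta}=(1+o(1))c_3(\log t)^{\delta-c_4}$.

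\emph{Step 1: $c_4\le 1$.} By Theorem~\ref{thm:lower}, $\liminf_t d_t\log t\ge K>0$. Suppose $c_4>1$. Then $(\log t)^{1-c_4}\to 0$, so $d_t\log t\to 0$, which is a contradiction. Hence $c_4\le 1$.

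\emph{Step 2: $c_4>\delta$.} By Theorem~\ref{thm:upper}, $d_t(\log t)^{\delta}\ll(\log\log t)^{\delta-3/2}\to 0$, because $\delta<3/2$.
\begin{itemize}
\item If $c_4<\delta$, then $c_3(\log t)^{\delta-c_4}\to\infty$, which is a contradiction.
\item If $c_4=\delta$, then $d_t(\log t)^{\delta}\to c_3>0$, again a contradiction.
\end{itemize}
Hence $c_4>\delta$. The strictness of this inequality is exactly where the factor $(\log\log t)^{\delta-3/2}$ in Theorem~\ref{thm:upper} is needed: a bound of the bare form $(\log t)^{-\delta}$ would only give $c_4\ge\delta$.

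\emph{Step 3: the case $c_4=1$.} Here $d_t\log t\to c_3$. By Theorem~\ref{thm:lower} this limit is at least $\liminf_t d_t\log t\ge K$, so $c_3\ge K$.

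No step is a genuine obstacle. The only point needing care is Step 2, where one must use the decaying $\log\log$ factor to exclude the boundary case $c_4=\delta$.
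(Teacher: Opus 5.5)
Your proposal is correct and follows the paper's proof essentially step for step: $c_4\le 1$ from the liminf in Theorem~\ref{thm:lower}, $c_4>\delta$ from $d_t(\log t)^{\delta}\to 0$ in Theorem~\ref{thm:upper} (with both $c_4<\delta$ and $c_4=\delta$ excluded), and $c_3\ge K$ from the same liminf when $c_4=1$. Your remark that strictness needs the decaying $(\log\log t)^{\delta-3/2}$ factor, i.e.\ the $o\bigl((\log t)^{-\delta}\bigr)$ conclusion, is also accurate.
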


Theorem~\ref{thm:lower} follows from Weingartner's theorem on integers with large practical
component \cite{We15pc}; Theorem~\ref{thm:upper} combines Erd\H{o}s's window decomposition
with Ford's theorem on divisors in intervals \cite{Fo08}. We claim no novelty of method in
either.

A natural guess, refining \eqref{eq:erdos-conj}, is that the lower bound is asymptotically
sharp: $d_t\sim K/\log t$. Equivalently, the density of integers from which one target $t$
is representable would match the density of integers from which every target up to $t$ is
representable. This is false, and the failure is pointwise.

Recall that $m$ is \emph{practical} if every integer in $[1,m]$ is a sum of distinct
divisors of $m$. Every $n$ factors uniquely as $n=mr$ with $m$ practical and
$P^-(r)>\sigma(m)+1$ (Lemma~\ref{lem:fact}). Grouping the divisors in a representation of
$t$ by their rough cofactor, the event that two distinct primes $p,q$ both divide the
cofactor and $t=b+ap+cq$ with bounded coefficients already produces a definite excess over
$\nu_t$. The residue estimate for this event upgrades, by a half-scale sumset, from covering
half of $\mathbb F_p$ to covering all of $\mathbb F_p$. Combined with an exact
finite-divisibility probability over a range $\sigma(m)\in(t^{1/3},t)$, this yields a
limiting profile $G$ and an explicit integral $I=\int_0^2 G$.

\begin{theorem}\label{thm:pointwise}
Unconditionally,
\[
  \liminf_{t\to\infty}(\log t)\,(d_t-\nu_t) \;\ge\; KI,
  \qquad
  \liminf_{t\to\infty}(\log t)\,d_t \;\ge\; K(1+I),
\]
where $I$ is the elementary integral \eqref{eq:I} below, with
$I=0.0588797799\ldots$. In particular $d_t\ge 0.79/\log t$ for every sufficiently large
integer $t$.
\end{theorem}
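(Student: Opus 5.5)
We plan to exhibit, for each large $t$, a set $\mathcal B_t\subseteq\A_t$ of integers with $f(n)<t$ and density at least $(KI+o(1))/\log t$. This gives $d_t\ge\nu_t+(KI+o(1))/\log t$, and Theorem~\ref{thm:lower} then yields the second inequality and the numerical bound, since $K(1+I)\approx 0.7943>0.79$.

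The first step uses the factorization of Lemma~\ref{lem:fact}: write $n=mr$ with $m$ practical and $P^-(r)>\sigma(m)+1$. We expect $f(n)=\sigma(m)$, or at least $f(n)<t$ whenever $\sigma(m)<t$ and $P^-(r)>t$. So we restrict to $n$ with $\sigma(m)=t^{u}$ for some $u\in(1/3,1)$. Here the divisors $ap$ with $a\mid m$ range over all integers $a\le\sigma(m)$, because $m$ is practical. We then look for two distinct primes $p<q$ dividing $r$ and coefficients $a,c\le\sigma(m)$ with $t-ap-cq=b\in[0,\sigma(m)]$. Any such triple represents $t$, since $b$, $ap$ and $cq$ are sums of distinct divisors of $m$, $mp$ and $mq$ respectively, and these three divisor sets are disjoint.

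The scales are as follows. Put $s=\sigma(m)$. We need $ap+cq\in[t-s,t]$ with $a,c\le s$, so $p,q$ lie roughly in $[t/s^{2}, t/s]\cdot O(1)$. For fixed $p,q$ the question becomes a residue problem: find $c\le s$ such that $t-cq\bmod p$ lands in a position reachable by $ap+b$. The set $\{ap+b: a,b\le s\}$ covers $[0,s(p+1)]$ completely once $p\le s+1$. The delicate regime is when $p$ exceeds $s$. There $t-cq \pmod p$ must fall in an interval of length $s$ inside $\mathbb F_p$ as $c$ ranges.

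The paper's ``half-scale sumset'' idea suggests splitting $c=c_1+c_2$ with $c_i\le s/2$, or using the divisors of $m$ at half scale. The aim is to show that the residues $\{cq \bmod p\}$ together with the interval $[0,s]$ cover all of $\mathbb F_p$, not merely half of it. This is the step we expect to be the main obstacle. We would first try an elementary pigeonhole/Dirichlet argument on $q/p$ modulo $1$: the multiples $cq$, $c\le s$, form a quasi-arithmetic progression in $\mathbb F_p$ with gaps at most $p/s$ after a suitable dilation. The sumset with an interval of length $s\ge p/s$ is then all of $\mathbb F_p$, up to a negligible exceptional set of pairs $(p,q)$ with bad Diophantine approximation.

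Next we compute the probability. Conditioned on the practical part $m$ with $\sigma(m)=t^u$, the rough part $r$ has a factor structure governed by the exact finite-divisibility probability over primes larger than $\sigma(m)+1$. The probability that $r$ has at least two distinct prime factors in the admissible window is $1-\prod(1-1/p)-\sum_p\frac1p\prod(1-1/q)$ in the appropriate ranges. By Mertens this tends to an explicit function of the log-ratios $w=\log(\text{window endpoint})/\log s$, which yields the profile $G(w)$. We must be careful not to double count $n$ already having $f(n)\ge t$; those are excluded because $\sigma(m)<t$.

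Finally we sum over $m$. Weingartner's distribution of the practical component gives that the density of $n$ whose practical part has $\sigma(m)\approx y$ is asymptotically $K\,\mathrm d(\log y)/(\log y)^{2}$ times a local factor. Changing variables to $w$, with $\sigma(m)=t^{1/(1+w)}$ or similar so that $w\in(0,2)$ corresponds to $u\in(1/3,1)$, turns the total excess density into $\frac{K}{\log t}\int_0^2G(w)\,\mathrm dw\,(1+o(1))$. Uniformity of the error terms in $u$ follows from the $O(\log\log t/\log t)$ rate in \eqref{eq:lower} together with dominated convergence.
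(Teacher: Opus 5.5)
Your outline follows the paper's architecture, but its quantitative core is missing. The shared parts are: the practical component $m$ with $s=\sigma(m)=t^{\kappa}$; two primes $p,q\mid r$ above $s+1$; the representation $t=b+ap+cq$; an exact CRT probability evaluated by Mertens; and Weingartner's mass $K\,\mathrm{d}\kappa/(\kappa^2\log t)$ with $\kappa=1/(1+w)$.

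What is missing is any specification of the admissible ranges for $(p,q)$. Without them, the claim that Mertens ``yields the profile $G(w)$'' is unsupported, because $G$ is not a generic output. It encodes specific windows, namely (up to constants and powers of $\log t$) $\max(s,t/s)<p<\sqrt{ts}$ and $\max(s,p^2/s^2)<q<p$. Measured in units of $\log s$, with $x=\log p/\log s$, the endpoints $\max(s,t/s)$, $\sqrt{ts}$ and $\max(s,p^2/s^2)$ are exactly $M(w)$, $h(w)=(w+2)/2$ and $\max(1,2x-2)$.

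The scales you do state are wrong. The condition $a,c\le s$ forces $\max(p,q)\gtrsim t/(2s)$ and imposes no upper bound $t/s$. Since both primes must also exceed $s+1$, your window $[t/s^2,t/s]\cdot O(1)$ is empty for every fixed $\kappa>1/2$, i.e.\ on $0<w<1$. Yet $G>0$ there, and $\int_0^1G=1-2\log\tfrac32-\log^2\tfrac32\approx 0.0247$, which is over $40\%$ of $I$.

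These windows are produced by the covering step, which you leave unquantified. Three ingredients are needed.
\begin{itemize}
\item[(a)] The coefficient of $q$ cannot run up to $s$. You need $cq+b<t$ so that $a=(t-cq-b)/p\ge 1$, and $p>4t/s$ so that $a\le s$. The paper uses the limit $C=\min(s,t/(8q))$.
\item[(b)] You need an explicit bound $O(p^2/(Cs))$ on exceptional slopes. The paper gets it by summing the rectangular energy over all slopes: this gives $|A|>p/2$ for a half-size grid, hence $A+A=\mathbb{F}_p$. A Diophantine route via the three-gap theorem can give the same count, since the bad $u/p$ lie within $1/(kC)$ of some $j/k$ with $k\ll p/s$. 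But ``gaps at most $p/s$ after a suitable dilation'' does not work as stated, because dilating destroys the interval $[0,s]$.
\item[(c)] Exceptional residues must be converted into small reciprocal mass of primes. This is free when $q<p$, since distinct primes below $p$ occupy distinct residues modulo $p$.
\end{itemize}
With these, a dyadic range $Q<q\le 2Q$ gives $\sum_{q\text{ bad}}1/q\ll p^2/(s^2Q)+p^2/(st)$. This is $O((\log t)^{-4})$ once $q>p^2(\log t)^4/s^2$ and $p<\sqrt{ts}/(\log t)^4$, which is precisely where the windows above come from.

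With your ordering $p<q$ (modulus the smaller prime), the same bookkeeping gives a different region of $(p,q)$ and hence a different profile. You would have to compute that profile and compare it with $G$, or switch to the paper's ordering. As written, the argument yields at best $\liminf(\log t)(d_t-\nu_t)\ge K\int_0^2\tilde G$ for an unspecified $\tilde G$, not the stated constant $KI$.
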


The second lower-bound constant is approximately $0.79432$. It is not asserted to be an
asymptotic constant for $d_t$, nor is $c_4=1$ established. The averaged and
positive-proportion excesses of an earlier draft are immediate, weaker consequences.

\begin{corollary}\label{cor:average}
There is an absolute constant $\beta_0>0$ such that, for all sufficiently large $X$,
\[
  \frac1X\sum_{X\le t<2X}\bigl(d_t-\nu_t\bigr) \;\ge\; \frac{\beta_0}{\log X}.
\]
The set of $t$ with $(\log t)\,(d_t-\nu_t)\ge\beta_0/2$ has lower density $1$.
\end{corollary}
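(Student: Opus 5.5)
The corollary should follow directly from Theorem~\ref{thm:pointwise}. I will take $\beta_0 := KI/2$, or anything strictly smaller than $KI$.

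\textbf{First statement.} By Theorem~\ref{thm:pointwise} there is $t_0$ such that
\[
(\log t)\,(d_t-\nu_t) \;\ge\; \tfrac34 KI \qquad\text{for all } t\ge t_0 .
\]
For $X\ge t_0$ and $X\le t<2X$ we have $\log t\le \log X+\log 2$. Hence
\[
d_t-\nu_t \;\ge\; \frac{3KI}{4(\log X+\log 2)} \;\ge\; \frac{KI}{2\log X}
\]
once $X$ is large enough, absorbing the factor $\log X/(\log X+\log 2)\to1$. Averaging this pointwise bound over the window $X\le t<2X$ gives the first claim with $\beta_0=KI/2$.

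\textbf{Second statement.} Again by Theorem~\ref{thm:pointwise}, for $t\ge t_0$ we have $(\log t)(d_t-\nu_t)\ge \tfrac34 KI > \beta_0/2$. So the exceptional set is contained in $[1,t_0)$, which is finite. The set in question is therefore cofinite, and so has natural density $1$, in particular lower density $1$.

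No step here is a genuine obstacle; all the difficulty lies in Theorem~\ref{thm:pointwise}. The only point needing care is that $\beta_0$ must be chosen strictly below $KI$, so that the $\liminf$ gives a uniform bound for large $t$ with room to absorb the $\log t$ versus $\log X$ discrepancy. The hypothesis $I>0$ is guaranteed by the numerical value $I=0.05887\ldots$.

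As the paper remarks, this corollary was an independent result in an earlier draft. The averaged form could also be obtained directly from an averaged version of the two-prime construction, averaging over $t$ rather than controlling residues pointwise. Given Theorem~\ref{thm:pointwise}, however, the deduction above is the one I would present.
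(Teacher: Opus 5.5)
Your proposal is correct and follows the paper's proof. Both use Theorem~\ref{thm:pointwise} to get a uniform pointwise bound $(\log t)(d_t-\nu_t)\ge\eta>0$ for all large $t$, average it over the dyadic window with $\beta_0$ a fixed fraction of $\eta$, and observe that the exceptional set is finite. One small point: when $X$ is not an integer, $[X,2X)$ may contain slightly fewer than $X$ integers, but the slack you already built in absorbs this.
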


\begin{corollary}\label{cor:refuted-constant}
The constant $K$ is not the asymptotic constant for $d_t\log t$. If \eqref{eq:erdos-conj}
holds, then either $c_4<1$, or $c_4=1$ with $c_3\ge K(1+I)>0.79$.
\end{corollary}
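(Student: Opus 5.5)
We derive Corollary~\ref{cor:refuted-constant} directly from Theorem~\ref{thm:pointwise}, with Theorem~\ref{thm:upper} handling the exponent bookkeeping. No new analytic input is needed; the argument compares the assumed asymptotic \eqref{eq:erdos-conj} with the two unconditional bounds.

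For the first sentence, suppose $d_t\log t\to K$. Theorem~\ref{thm:pointwise} gives
\[
\liminf_{t\to\infty} d_t\log t\ge K(1+I),
\]
and $I>0$ because $I=0.05887\ldots$ is a positive explicit integral. This contradicts the supposition, so $K$ is not the asymptotic constant for $d_t\log t$. Equivalently, $d_t\not\sim K/\log t$.

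For the second sentence, assume \eqref{eq:erdos-conj} holds with $c_3>0$, and split into cases according to $c_4$.

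\begin{itemize}
\item If $c_4<1$, there is nothing to prove.
\item If $c_4>1$, then $d_t\log t\sim c_3(\log t)^{1-c_4}\to 0$. This contradicts Theorem~\ref{thm:lower}, since $\liminf d_t\log t\ge K>0$. (This is the same reasoning that gives $c_4\le1$ in Corollary~\ref{cor:exponent}.)
\item If $c_4=1$, then $d_t\log t\to c_3$. Theorem~\ref{thm:pointwise} then forces $c_3\ge K(1+I)$.
\end{itemize}

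It remains to check $K(1+I)>0.79$. We have $K=c\,e^{-\gamma}$ with $c=1.33607\ldots$ and $e^{-\gamma}=0.561459\ldots$, so $K=0.75015\ldots$. Together with $I=0.05887\ldots$ this gives $K(1+I)=0.79432\ldots>0.79$.

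The only point needing care is this last numeric inequality, and its margin is comfortable: a lower bound $K\ge0.7501$ and $I\ge0.0588$ already give $K(1+I)\ge 0.7942$. So there is no real obstacle beyond citing Theorem~\ref{thm:pointwise} and the stated numerical values of $c$ and $I$.
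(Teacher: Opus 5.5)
Your proof is correct and matches the paper's: the paper likewise gets $c_4\le 1$ from Theorem~\ref{thm:lower} (via Corollary~\ref{cor:exponent}), and in the case $c_4=1$ it reads off $c_3\ge K(1+I)$ from the liminf in Theorem~\ref{thm:pointwise}. Two small differences: the paper certifies $K(1+I)>0.79$ from the rigorously verified bounds $K>3/4$ and $I>4/75$ of Section~\ref{sec:constant} rather than from the quadrature decimals you quote, and Theorem~\ref{thm:upper}, which you mention in your opening, is not actually needed for this statement.
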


The rest of the note is organised as follows. Section~\ref{sec:framework} records the
periodicity of $\A_t$ and the practical-component factorisation.
Section~\ref{sec:bounds} proves Theorems~\ref{thm:lower} and~\ref{thm:upper}.
Section~\ref{sec:excess} develops the two-prime construction and proves
Theorem~\ref{thm:pointwise}. Section~\ref{sec:constant} evaluates $I$.

\section{Density and the practical component}\label{sec:framework}

That $d_t$ exists for every $t$ is due to Erd\H{o}s \cite{Er70}. The following lemma records
the elementary periodic structure, which additionally gives rationality with an explicit
denominator.

\begin{lemma}\label{lem:periodic}
For every positive integer $t$,
\[
  \A_t \;=\; \bigcup_{\substack{S\subseteq\{1,\ldots,t\}\\ \sum_{a\in S}a=t}}
  \{\,n:\lcm(S)\mid n\,\}.
\]
Consequently $\A_t$ is a finite union of residue classes modulo $L_t:=\lcm(1,\ldots,t)$,
and $d_t$ exists and is rational with denominator dividing $L_t$.
\end{lemma}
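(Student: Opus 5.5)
The plan is to prove the set identity by double inclusion; the periodicity, existence and rationality claims then follow by counting residue classes. The one idea needed is that a set of distinct divisors of $n$ summing to $t$ can only contain integers in $\{1,\ldots,t\}$, and that such a set consists of divisors of $n$ exactly when its lcm divides $n$.

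For the inclusion $\subseteq$, take $n\in\A_t$ and a set $S$ of distinct divisors of $n$ with $\sum_{d\in S}d=t$. Every $d\in S$ is a positive integer with $d\le\sum_{a\in S}a=t$, so $S\subseteq\{1,\ldots,t\}$. Since each element of $S$ divides $n$, we get $\lcm(S)\mid n$, so $n$ lies in the set indexed by $S$ on the right. For $\supseteq$, suppose $S\subseteq\{1,\ldots,t\}$ has element sum $t$ and $\lcm(S)\mid n$. Then every $a\in S$ divides $\lcm(S)$ and hence divides $n$. Thus $S$ is a set of distinct divisors of $n$ summing to $t$, and $n\in\A_t$. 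The sets $S$ are nonempty because $t\ge1$, so $\lcm(S)$ is well defined. If one prefers to allow $S=\emptyset$ with $\lcm(\emptyset)=1$, that set has sum $0\ne t$ and never occurs in the union.

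For the consequences, there are finitely many such $S$ (at most $2^t$). For each $S$ we have $\lcm(S)\mid L_t$, so $\{n:\lcm(S)\mid n\}$ is a union of $L_t/\lcm(S)$ residue classes modulo $L_t$. A finite union of such sets is again a union of residue classes modulo $L_t$, say $R$ of them. Any set of the form $\{n\ge1: n\equiv r \pmod{L_t}\}$ has natural density $1/L_t$. Distinct classes are disjoint, so $\A_t$ has natural density $R/L_t$, which is rational with denominator dividing $L_t$.

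I expect no step to be a genuine obstacle; the only point needing care is the bound $d\le t$ for each $d\in S$, which uses positivity of divisors. Everything else is bookkeeping.
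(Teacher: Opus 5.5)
Your proposal is correct and follows essentially the same argument as the paper. You prove double inclusion using that every element of a representing set is at most $t$, and that the elements all divide $n$ exactly when $\lcm(S)\mid n$; you then use $\lcm(S)\mid L_t$ to write $\A_t$ as a union of residue classes modulo $L_t$, and your explicit density count $R/L_t$ spells out what the paper leaves implicit.
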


\begin{proof}
If $n\in\A_t$, the representing set $S$ consists of distinct divisors of $n$, each at most
$t$, and $\lcm(S)\mid n$. Conversely, if $S\subseteq\{1,\ldots,t\}$ has sum $t$ and
$\lcm(S)\mid n$, every element of $S$ divides $n$. Each $\lcm(S)$ divides $L_t$.
\end{proof}

By a theorem of Stewart \cite{St54} and Sierpi\'nski \cite{Si55},
$m=p_1^{a_1}\cdots p_k^{a_k}$ with $p_1<\cdots<p_k$ is practical if and only if $p_1=2$ and
$p_{j+1}\le\sigma(p_1^{a_1}\cdots p_j^{a_j})+1$ for $1\le j<k$. Moreover $f(n)=\sigma(g(n))$,
where $g(n)$ is the largest practical divisor of $n$ \cite[Lemma~2.1]{PoTh13}. Every
practical $m\ge 2$ is even, and $\sigma(m)/m\ge 3/2$ for every practical $m\ge 2$. Also
$\sigma(m)/m\ll\log\log m$ for all $m\ge 3$ (Gronwall; see \cite[Thm.~323]{HaWr08}).

Weingartner \cite[Cor.~1]{We15pc} proved that $\{n:g(n)=m\}$ has density
\begin{equation}\label{eq:alpha}
  \alpha_m \;=\; \frac{\chi(m)}{m}\prod_{p\le\sigma(m)+1}\Bigl(1-\frac1p\Bigr),
\end{equation}
$\chi$ the indicator of practicality. The unique decomposition of every integer into a
practical component and a rough cofactor is standard --- it is stated explicitly in the
proof of Lemma~1 of \cite{We15pc}, with the practical component as in \cite{PoTh13}.
Henceforth sums over $m$ are restricted to practical integers.

\begin{lemma}\label{lem:fact}
$g(n)=m$ if and only if $m\mid n$, $m$ is practical, and $P^-(n/m)>\sigma(m)+1$.
Consequently every $n$ admits a unique factorisation $n=mr$ with $m$ practical and
$P^-(r)>\sigma(m)+1$; the divisors of $n$ are exactly the products $ef$ with $e\mid m$,
$f\mid r$, and this representation of each divisor is unique. Moreover, for each fixed
practical $m$ and each $f$ with $P^-(f)>\sigma(m)+1$,
\begin{equation}\label{eq:cond}
  \#\{n\le x:g(n)=m,\ f\mid(n/m)\}
  \;=\; \#\{r\le x/(mf):P^-(r)>\sigma(m)+1\}.
\end{equation}
\end{lemma}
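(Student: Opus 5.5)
The plan is to prove the three assertions of Lemma~\ref{lem:fact} in order: the characterisation of $g(n)=m$, then the divisor decomposition, then the counting identity \eqref{eq:cond}. Each step uses only the Stewart--Sierpi\'nski criterion and the fact that $g(n)$ is the largest practical divisor.

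\emph{The characterisation.} For the forward direction, suppose $g(n)=m$ and put $r=n/m$. Let $p$ be a prime dividing $r$, and suppose for contradiction that $p\le\sigma(m)+1$. Write $m=m'p^a$ with $p\nmid m'$, where $a\ge 0$.
\begin{itemize}
\item If $p\nmid m$, form $mp$ and check it against the Stewart--Sierpi\'nski criterion. The primes of $m$ below $p$ are unchanged. The prime $p$ itself satisfies $p\le\sigma(m_{<p})+1$; this needs care, since the criterion is phrased with the prefix product.
\end{itemize}
The cleaner route avoids the prefix issue by using the characterisation that $m$ practical and $p\le\sigma(m)+1$ imply $mp$ practical, and indeed $mp^k$ practical for any $k$. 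This is the standard fact that the sums of distinct divisors of $mp$ are $\{x+py: x,y\in[0,\sigma(m)]\}$, which covers $[0,(p+1)\sigma(m)]=[0,\sigma(mp)]$ when $p\le\sigma(m)+1$. It holds whether or not $p\mid m$: if $p\mid m$, use $m'$ and $p^{a+1}$ with the same interval argument applied to the divisor structure, or argue directly that the divisors of $mp$ contain $D(m)\cup pD(m)$. I would state this interval lemma first and prove it directly; I expect it to be the main small obstacle, because the case $p\mid m$ makes $D(m)$ and $pD(m)$ overlap. In that case I would replace $m$ by $m'$ and apply the criterion to $m'p^{a+1}$, noting that $\sigma(m'p^{a})\ge p^a\sigma(m')$. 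Granting the lemma, $mp\mid n$ is a practical divisor larger than $m$, contradicting maximality. The conditions that $m\mid n$ and $m$ is practical are immediate.

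For the converse, assume $m\mid n$, $m$ is practical and $P^-(n/m)>\sigma(m)+1$. Let $m^*=g(n)$. Since $m\mid n$ is practical, $m\le m^*$. I would show $m^*\mid m$ as follows. Order the primes of $m^*$ increasingly. The criterion, applied inductively, shows that each prefix of $m^*$ consists of primes at most $\sigma$ of the preceding prefix plus $1$. Suppose some prime power in $m^*$ exceeds that in $m$, and take the first such prime $q$. Every prefix of $m^*$ below $q$ divides $m$, so $q\le\sigma(m)+1$ (using $\sigma$ monotone under divisibility, and for the same prime with a larger exponent, $q\mid m$ already). So $q\mid n/m$ with $q\le\sigma(m)+1$, contradicting the roughness hypothesis. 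Hence $m^*\mid m$, and so $m^*=m$.

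\emph{Uniqueness and the divisor decomposition.} Uniqueness of the factorisation follows because $m=g(n)$ is determined by $n$. Since $P^-(r)>\sigma(m)+1>$ every prime of $m$, we have $\gcd(m,r)=1$. Multiplicativity of the divisor lattice then gives the unique representation $d=ef$ with $e=\gcd(d,m)$ and $f=\gcd(d,r)$.

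\emph{The counting identity \eqref{eq:cond}.} By the characterisation, $n$ with $g(n)=m$ and $f\mid n/m$ corresponds exactly to $n=mfr$, where $P^-(fr)>\sigma(m)+1$. Because $P^-(f)>\sigma(m)+1$ already holds, this condition is equivalent to $P^-(r)>\sigma(m)+1$. The bijection $n\leftrightarrow r$ with $n\le x\iff r\le x/(mf)$ gives \eqref{eq:cond}.
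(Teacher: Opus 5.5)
Your proposal is correct and has the same architecture as the paper's proof. Three parts match the paper up to bookkeeping:
\begin{itemize}
\item the converse: take the least prime $q$ with $v_q(m^*)>v_q(m)$; its chain prefix divides $m$, so $q\le\sigma(m)+1$ while $q\mid n/m$;
\item the coprimality of $m$ and $r$, which gives the divisor decomposition;
\item the bijection behind \eqref{eq:cond}.
\end{itemize}
The real difference is the key step that $mp$ is practical whenever $p\le\sigma(m)+1$. The paper stays inside the Stewart--Sierpi\'nski criterion and splits three ways: $p\mid m$; $p>P^+(m)$; and $p<P^+(m)$ with $p\nmid m$. In the last case it inserts $p$ using $p<p^*\le\sigma(P)+1$, where $p^*$ is the next prime of $m$. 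For $p\nmid m$ you instead use $D(mp)=D(m)\sqcup pD(m)$ and overlapping intervals, which is exactly the inductive step of Lemma~\ref{lem:subsetsums}. This treats $p$ above and below $P^+(m)$ uniformly and removes the insertion bookkeeping. The cost is that you need the filling property of practical numbers. The paper proves that property independently of this lemma, so there is no circularity.

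Your handling of $p\mid m$ needs tidying. Running the interval argument on $m'$ and $p^{a+1}$ does not work, because $m'$ need not be practical (e.g.\ $m=6$, $p=2$, $m'=3$). The inequality $\sigma(m'p^a)\ge p^a\sigma(m')$ plays no role. Your last option, read literally, is the right one and is the paper's one-line case: $m'p^{a+1}$ has the same primes as $m$, and each chain prefix has $\sigma$ at least as large, so every Stewart--Sierpi\'nski inequality persists.

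Alternatively, the overlap you worry about disappears if you separate by size rather than by divisibility. For $1\le k\le mp$ write $k=pk_1+k_0$ with $0\le k_0<p$ and $0\le k_1\le m$. Then $k_0\le\sigma(m)$ is a sum of distinct divisors of $m$, each at most $k_0<p$. Meanwhile $pk_1$ is a sum of distinct numbers $pd\ge p$ with $d\mid m$, so no divisor is used twice. This shows $mp$ is practical for $p\mid m$ and $p\nmid m$ at once.
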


\begin{proof}
First, if $m$ is practical and $q\le\sigma(m)+1$ is prime, then $mq$ is practical. For
$m=1$ the only such prime is $q=2$ and $mq=2$ is practical; so assume $m\ge 2$, hence
$2\mid m$. If $q\mid m$ this raises one exponent, which weakens no chain condition; if
$q>P^+(m)$ the chain is extended by one step, legal since $q\le\sigma(m)+1$; and if
$q<P^+(m)$ with $q\nmid m$, let $p^*$ be the least prime factor of $m$ exceeding $q$ and
$P$ the product of the prime powers in $m$ below $q$ --- then $q<p^*\le\sigma(P)+1$ by the
criterion for $m$, so $q$ may be inserted, and all later chain conditions only improve as
$\sigma$ grows. Hence if $g(n)=m$ and some prime $q\le\sigma(m)+1$ divided $n/m$, then
$mq\mid n$ would be a larger practical divisor of $n$, contradicting maximality.

Conversely, suppose $P^-(n/m)>\sigma(m)+1$ with $m$ practical, and let $m'\mid n$ be
practical. Every prime factor of $m'$ not exceeding $\sigma(m)+1$ divides $m$, with
$v_p(m')\le v_p(n)=v_p(m)$. If $m'$ had a prime factor exceeding $\sigma(m)+1$, let $Q$ be
the least such: then every prime factor of $m'$ below $Q$ is at most $\sigma(m)+1$, so the
chain prefix $P'$ of $m'$ below $Q$ divides $m$, whence $Q\le\sigma(P')+1\le\sigma(m)+1$, a
contradiction. So $m'\mid m$, and $m$ is the largest practical divisor.

Uniqueness and the divisor decomposition follow since $\gcd(m,r)=1$ (all prime factors of
$r$ exceed $\sigma(m)+1>P^+(m)$). Identity \eqref{eq:cond} is immediate from the first
claim applied to $n=mfr'$: for $n\le x$ with $mf\mid n$ and $n/(mf)=r'$, the condition
$g(n)=m$ holds if and only if $P^-(fr')>\sigma(m)+1$, and since $P^-(f)>\sigma(m)+1$ is
given, this is the condition $P^-(r')>\sigma(m)+1$ on $r'\le x/(mf)$.
\end{proof}

Taking $x\to\infty$ in \eqref{eq:cond} yields, for $P^-(f)>\sigma(m)+1$,
\begin{equation}\label{eq:dens-mf}
  \dens\{n:g(n)=m,\ f\mid n/m\} \;=\; \frac{\alpha_m}{f}.
\end{equation}
The event $\{f(n)<y\}$ is the disjoint union of $\{g(n)=m\}$ over practical $m$ with
$\sigma(m)<y$. This union is finite, because $m\le\sigma(m)<y$. Natural density is finitely
additive, so
\begin{equation}\label{eq:tail}
  \nu_y \;=\; 1-\sum_{\sigma(m)<y}\alpha_m.
\end{equation}
(Weingartner \cite[p.~441]{We15pc} records the finite-complement formula explicitly.) No
countable additivity of density is used.

The subset sums of a practical number fill an interval exactly.

\begin{lemma}\label{lem:subsetsums}
If $m$ is practical, the subset sums of the divisors of $m$ are exactly the integers in
$[0,\sigma(m)]$.
\end{lemma}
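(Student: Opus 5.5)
The plan is to prove the two inclusions separately. One direction is trivial; the other is the definition of practicality, pushed through the divisor lattice by induction on the Stewart--Sierpi\'nski chain.

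Any subset sum of divisors of $m$ is a nonnegative integer at most $\sigma(m)$. So only the reverse inclusion needs work: every integer $k\in[0,\sigma(m)]$ is a subset sum.

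By definition of practicality, every $k\in[1,m]$ is a subset sum, and $k=0$ is the empty sum. The case $m=1$ is immediate, since $\sigma(1)=1$. For $k\in(m,\sigma(m)]$, I would use complementation. The map $S\mapsto D\setminus S$, where $D$ is the set of divisors of $m$, sends subset sums $s$ to $\sigma(m)-s$. Hence the set of subset sums is symmetric about $\sigma(m)/2$.

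If $\sigma(m)\le 2m$, this symmetry already finishes the proof. Given $k\in(m,\sigma(m)]$, we have $\sigma(m)-k<\sigma(m)-m\le m$, so $\sigma(m)-k$ lies in $[0,m]$ and is a subset sum, and therefore so is $k$.

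For general practical $m$ (abundant ones such as $12$ have $\sigma(m)>2m$), I would instead induct along the Stewart--Sierpi\'nski chain. The claim to prove is that for each prefix $m_j=p_1^{a_1}\cdots p_j^{a_j}$, the subset sums of the divisors of $m_j$ are exactly $[0,\sigma(m_j)]$.

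The base case is $m_1=2^{a}$, whose divisors $1,2,\ldots,2^{a}$ give every integer in $[0,2^{a+1}-1]$ by binary expansion.

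For the inductive step, write $m_{j+1}=m_j p^{a}$ with $p\le\sigma(m_j)+1$. The divisors of $m_{j+1}$ split disjointly as $\bigcup_{i=0}^{a} p^i\cdot\{\text{divisors of }m_j\}$. By the inductive hypothesis, the $i$-th block contributes exactly the multiples of $p^i$ in $[0,p^i\sigma(m_j)]$. Each such set is $p^i$ times a full interval, so it can be viewed as $\{p^i u: 0\le u\le\sigma(m_j)\}$.

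I would then show by induction on $i$ that the sumset of blocks $0,\ldots,i$ equals $[0,\sigma(m_j)(1+p+\cdots+p^i)]$. Suppose the sumset of the first $i$ blocks is the interval $[0,T_{i-1}]$, where $T_{i-1}=\sigma(m_j)(1+\cdots+p^{i-1})$. Adding the multiples $p^i u$ with $0\le u\le \sigma(m_j)$ gives a full interval provided consecutive gaps $p^i$ are covered, that is, provided $p^i\le T_{i-1}+1$. This follows from $p\le\sigma(m_j)+1$ by a short calculation: $T_{i-1}+1\ge(p-1)(1+\cdots+p^{i-1})+1=p^i$. Summing over $i$ gives $[0,\sigma(m_{j+1})]$.

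The only real obstacle is this gap-covering inequality, and it reduces exactly to the chain condition. The rest is bookkeeping, using the unique representation of divisors as products, as in Lemma~\ref{lem:fact}.
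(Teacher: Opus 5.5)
Your proposal is correct and follows essentially the same route as the paper. Both induct along the Stewart--Sierpi\'nski chain, split the divisors of $m_j p^a$ into the blocks $p^i\cdot\{d: d\mid m_j\}$, and verify the overlap condition $p^i\le 1+\sigma(m_j)(1+p+\cdots+p^{i-1})$ from $p\le\sigma(m_j)+1$. Your complementation argument for $\sigma(m)\le 2m$ is harmless but redundant, since the chain induction already covers every practical $m$.
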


\begin{proof}
Suppose the divisors of $M$ have subset sums filling $[0,S]$, $S=\sigma(M)$, and let
$p\le S+1$ be a prime not dividing $M$, $a\ge 1$. The divisors of $Mp^a$ are the disjoint
blocks $p^j\{d:d\mid M\}$, $0\le j\le a$; choosing a subset independently in each block
realizes every sum $\sum_{j=0}^{a}p^j u_j$ with $0\le u_j\le S$. Increasing the digit
$u_j$ by one and letting the lower digits range fills consecutive intervals which overlap
because $p^j\le 1+S(1+p+\cdots+p^{j-1})$, an inequality that holds since $S\ge p-1$. Hence
the sums fill $[0,S\sigma(p^a)]=[0,\sigma(Mp^a)]$. By the Stewart--Sierpi\'nski criterion,
every practical $m$ is built from $M=1$ by such steps taken one prime power at a time (the
first step adjoins $2^{a_1}$, legal as $2\le\sigma(1)+1$).
\end{proof}

\section{Proofs of the two-sided bounds}\label{sec:bounds}

\begin{proof}[Proof of Theorem~\ref{thm:lower}]
If $f(n)\ge t$ then in particular the integer $t$ is a sum of distinct divisors of $n$, so
$\{n:f(n)\ge t\}\subseteq\A_t$. For $t=1$, $f(n)\ge 1$ for all $n$ and $d_1=\nu_1=1$. For
fixed $t\ge 2$, Weingartner \cite[Thm.~1(iv)]{We15pc} proved
$\#\{n\le x:f(n)\ge t\}=x\nu_t+O(2^t)$; dividing by $x$ and letting $x\to\infty$ (both
limits exist, by Lemma~\ref{lem:periodic} for the right side) gives $d_t\ge\nu_t$, and the
display following his eq.~(3) gives the asymptotic \eqref{eq:lower}.
\end{proof}

\begin{proof}[Proof of Theorem~\ref{thm:upper}]
We follow Erd\H{o}s's splitting \cite[p.~130]{Er70}. (Erd\H{o}s wrote the window with an
open upper endpoint, $(t/(\log t)^2,\,t)$; we use $(t/(\log t)^2,\,t]$.) Put $L:=\log t$,
$y:=t/L^2$, $z:=t$, and let $t$ be large. Suppose $n\in\A_t$, say $t=d_1+\cdots+d_k$ with
$d_1>\cdots>d_k$ distinct divisors of $n$; every $d_i\le t$.

\emph{Case 1: some $d_i>y$.} Then $n$ has a divisor in $(y,z]$. For large $t$ we have
$y\ge 100$, $2y\le z$, and $z\le y^2$. For $1\le y\le z\le x$ let $H(x,y,z)$ denote the
number of $n\le x$ with a divisor in $(y,z]$, and
$\varepsilon(y,z):=\lim_{x\to\infty}H(x,y,z)/x$, which exists by periodicity of the
underlying set. Ford \cite[Thm.~1(v)]{Fo08} gives, in the range $100\le y$,
$2y\le z\le y^2$, writing $z=y^{1+u}$,
\[
  H(x,y,z) \;\asymp\; x\, u^{\delta}\Bigl(\log\frac2u\Bigr)^{-3/2}
  \qquad (x>x_0,\ y\le\sqrt{x}),
\]
uniformly. Here
\[
  u \;=\; \frac{\log(z/y)}{\log y} \;=\; \frac{2\log\log t}{\log t-2\log\log t}
  \;\asymp\; \frac{\log\log t}{\log t},
  \qquad
  \log\frac2u \;\asymp\; \log\log t,
\]
so, dividing by $x$ and letting $x\to\infty$ for each fixed $t$,
\[
  \varepsilon(y,z)
  \;\ll\; \Bigl(\frac{\log\log t}{\log t}\Bigr)^{\!\delta}(\log\log t)^{-3/2}
  \;=\; \frac{(\log\log t)^{\delta-3/2}}{(\log t)^{\delta}}.
\]

\emph{Case 2: every $d_i\le y$.} Write $S_y(n)=\sum_{d\mid n,\,d\le y}d$. A representation
using only divisors at most $y$ forces $S_y(n)\ge t$, while
$\sum_{n\le x}S_y(n)=\sum_{d\le y}d\lfloor x/d\rfloor\le x\lfloor y\rfloor$. The density of
this event is therefore at most $y/t=1/(\log t)^2$.

Combining, $d_t\le\varepsilon(y,z)+O\bigl((\log t)^{-2}\bigr)
\ll(\log\log t)^{\delta-3/2}(\log t)^{-\delta}$, since $\delta<1$. As $\delta-3/2<0$, the
bound is $o((\log t)^{-\delta})$.
\end{proof}

\begin{proof}[Proof of Corollary~\ref{cor:exponent}]
By Theorem~\ref{thm:lower}, $\liminf_{t\to\infty}d_t\log t\ge K>0$; if $c_4>1$ the
asymptotic \eqref{eq:erdos-conj} would force $d_t\log t\to 0$, so $c_4\le 1$. By
Theorem~\ref{thm:upper}, $d_t(\log t)^{\delta}\to 0$; if $c_4<\delta$ the asymptotic would
force $d_t(\log t)^{\delta}\to\infty$, and if $c_4=\delta$ it would force
$d_t(\log t)^{\delta}\to c_3>0$, so $c_4>\delta$. If $c_4=1$ then $d_t\log t\to c_3$, and
the lower limit gives $c_3\ge K$.
\end{proof}

\begin{remark}
Ford's estimate is sharp ($\asymp$) for the unconditioned set of integers having some
divisor in the window, so no improvement in Theorem~\ref{thm:upper} can come from that
estimate alone: one must exploit the requirement that the window divisor participates in an
actual representation of $t$.
\end{remark}

\section{The two-prime excess}\label{sec:excess}

Write $n=mr$ as in Lemma~\ref{lem:fact}, and $s:=\sigma(m)$. Each divisor of $n$ is uniquely
$ef$ with $e\mid m$ and $f\mid r$. Given a representation $t=\sum_{d\in S}d$ from $n$, group
the elements of $S$ by their rough cofactor: for each $f\mid r$ let $a_f$ be the sum of the
$m$-parts $e$ of those elements of $S$ with cofactor $f$ (with $a_f=0$ if none), so that
$t=a_1+\sum_{f>1}a_f f$.

For $s\ge 1$ let $U_2^{\mathrm{pr}}(t,s)$ denote the density of integers $r$ for which there
are distinct primes $p,q>s+1$, both dividing $r$, and integers
\begin{equation}\label{eq:twoprime-event}
  t = b + ap + cq, \qquad 1\le a,c\le s, \quad 0\le b\le s.
\end{equation}
Only primes below $t$ can occur, so this event is periodic of period dividing
$\lcm(1,\ldots,t)$. The phrase ``two active prime cofactors'' refers to this
representation, not to $\omega(r)$.

\begin{lemma}\label{lem:reduction}
For every $t\ge 1$,
\begin{equation}\label{eq:reduction}
  d_t-\nu_t \;\ge\; \sum_{\sigma(m)<t}\alpha_m\, U_2^{\mathrm{pr}}\bigl(t,\sigma(m)\bigr).
\end{equation}
The sum is finite.
\end{lemma}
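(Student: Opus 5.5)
We need to show $d_t-\nu_t\ge\sum_{\sigma(m)<t}\alpha_m U_2^{\mathrm{pr}}(t,\sigma(m))$. The plan is to exhibit, inside $\A_t\setminus\{n:f(n)\ge t\}$, a disjoint family of sets indexed by practical $m$ with $\sigma(m)<t$, compute the density of each exactly, and add finitely many densities.

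\textbf{The sets.} Since $f(n)=\sigma(g(n))$, we have $\{n:f(n)<t\}=\bigsqcup_{\sigma(m)<t}\{n:g(n)=m\}$, a finite union because $m\le\sigma(m)<t$. For each such $m$, with $s=\sigma(m)$, let $E_m$ be the set of $n$ with $g(n)=m$ whose cofactor $r=n/m$ satisfies the event defining $U_2^{\mathrm{pr}}(t,s)$.

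\textbf{Inclusion $E_m\subseteq\A_t$.} Given $t=b+ap+cq$ with $p\ne q$ primes dividing $r$, $1\le a,c\le s$ and $0\le b\le s$, Lemma~\ref{lem:subsetsums} writes each of $b$, $a$, $c$ as a sum of distinct divisors of $m$, say over sets $S_b,S_a,S_c$. The candidate representing set is $S_b\cup pS_a\cup qS_c$. Its elements are divisors of $n=mr$ because $p,q\mid r$. The three blocks are pairwise disjoint by uniqueness of the factorisation $ef$ in Lemma~\ref{lem:fact}: the cofactors $1$, $p$, $q$ are distinct divisors of $r$. Hence $t$ is representable from $n$. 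Also $E_m\cap\{f\ge t\}=\emptyset$, since $f(n)=s<t$. So $\bigsqcup_m E_m\subseteq\A_t\setminus\{f\ge t\}$, and finite additivity of density (using that $d_t$ and $\nu_t$ exist) gives $d_t-\nu_t\ge\sum_m\dens E_m$, provided each $\dens E_m$ exists.

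\textbf{Main step: $\dens E_m=\alpha_m U_2^{\mathrm{pr}}(t,s)$.} The event on $r$ depends only on which primes $p\in(s+1,t)$ divide $r$. It is therefore a finite union of divisibility conditions: $r$ lies in the event iff $f\mid r$ for some $f$ in a finite set $\mathcal F$ of products $pq$. Each $f\in\mathcal F$ has $P^-(f)>s+1$. By inclusion--exclusion over subsets of $\mathcal F$, using lcm's (which are again squarefree with all prime factors $>s+1$), it suffices to know $\dens\{n:g(n)=m,\ f\mid n/m\}$ for such $f$. This is $\alpha_m/f$ by \eqref{eq:dens-mf}. Hence
\[
  \dens E_m=\alpha_m\sum_{\emptyset\ne\mathcal G\subseteq\mathcal F}\frac{(-1)^{|\mathcal G|+1}}{\lcm(\mathcal G)}.
\]
The sum here is precisely the density over all integers $r$ of $\{r:\exists f\in\mathcal F,\ f\mid r\}$, which is $U_2^{\mathrm{pr}}(t,s)$. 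This conditional-independence step is the only delicate point. It works because the primes involved exceed $s+1$, so conditioning on $g(n)=m$ acts on them like unconditioned divisibility, which is exactly what \eqref{eq:cond} provides. Summing over the finitely many $m$ gives \eqref{eq:reduction}.
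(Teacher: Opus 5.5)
Your proposal is correct and follows the paper's proof. It uses the same disjoint sets $E_m$ indexed by practical $m$ with $\sigma(m)<t$, the same realisation of $a,b,c$ via Lemma~\ref{lem:subsetsums} with distinct cofactors $1,p,q$, and the same observation that $f(n)=\sigma(m)<t$ on $E_m$; the only difference is that you make the paper's one-line Chinese-remainder independence claim explicit, writing the event as a finite union of conditions $pq\mid r$ with $P^-(pq)>s+1$ and applying inclusion--exclusion with \eqref{eq:dens-mf}.
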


\begin{proof}
Fix practical $m$ with $s=\sigma(m)<t$. Conditional on $g(n)=m$, the cofactor $r=n/m$ is
constrained only in that $P^-(r)>s+1$. For primes $p,q>s+1$, divisibility by $p$ and $q$
retains its ordinary Chinese-remainder probabilities, independently of the small-prime
constraint, so the density of $n$ with $g(n)=m$ for which \eqref{eq:twoprime-event} holds
is $\alpha_m\,U_2^{\mathrm{pr}}(t,s)$. These events for distinct $m$ are disjoint.

If $g(n)=m$ and \eqref{eq:twoprime-event} holds, then $f(n)=s<t$. Lemma~\ref{lem:subsetsums}
realises each of the three numerical coefficients $a,b,c$ separately as a subset sum of
divisors of $m$ (combine the coefficients first; do not union previously chosen subsets,
which might overlap). The resulting products with cofactors $1$, $p$, and $q$ are pairwise
distinct, so $n\in\A_t$. Thus each such $n$ lies in $\A_t\setminus\{f\ge t\}$, a set of
density $d_t-\nu_t$.
\end{proof}

\subsection{From energy to full residue coverage}

\begin{lemma}[Rectangular energy]\label{lem:energy}
Let $p$ be prime and $1\le M,N<p$. For $u\in\mathbb F_p^\times$, let
\[
  R_u(x)=\#\{(c,b):1\le c\le M,\ 0\le b<N,\ cu+b\equiv x\pmod p\},
  \qquad E_u=\sum_{x\in\mathbb F_p}R_u(x)^2.
\]
Then
\begin{align}
\sum_{u\ne 0}E_u&=(p-1)MN+M(M-1)N(N-1),\label{eq:energy1}\\
\sum_{u\ne 0}\Bigl(E_u-\frac{(MN)^2}{p}\Bigr)
&=\frac{MN(p-M)(p-N)}{p}.\label{eq:energy2}
\end{align}
Fewer than $(p-M)(p-N)/(MN)$ slopes have $|\operatorname{supp} R_u|<p/2$.
\end{lemma}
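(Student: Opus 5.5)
The plan is to expand $E_u$ as a count of coincidences and then average over all nonzero slopes $u$. Since $E_u=\sum_x R_u(x)^2$, it counts ordered pairs $((c,b),(c',b'))$ in the rectangle $[1,M]\times[0,N)$ with $(c-c')u\equiv b'-b\pmod p$. Split these pairs into three types.
\begin{itemize}
\item \emph{Diagonal pairs} ($c=c'$, $b=b'$). There are $MN$ of them, each counted for all $p-1$ slopes.
\item \emph{Pairs with $c=c'$ but $b\ne b'$.} These would need $b\equiv b'\pmod p$, which is impossible because $|b-b'|<N<p$.
\item \emph{Pairs with $c\ne c'$.} Here $c-c'\not\equiv0$ because $M<p$, so exactly one $u$ works, namely $u\equiv(b'-b)(c-c')^{-1}$. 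That $u$ is nonzero precisely when $b\ne b'$. So each pair with $c\ne c'$ and $b\ne b'$ contributes exactly once, and there are $M(M-1)N(N-1)$ of them.
\end{itemize}
Adding the three types gives \eqref{eq:energy1}.

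Identity \eqref{eq:energy2} is then pure algebra. Subtract $(p-1)(MN)^2/p$ from \eqref{eq:energy1} and factor. Indeed,
\[
p(p-1)MN+pM(M-1)N(N-1)-(p-1)M^2N^2
\]
divided by $MN$ equals
\[
p^2-p+p(MN-M-N+1)-(p-1)MN=p^2-pM-pN+MN=(p-M)(p-N).
\]

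For the support count, use Cauchy--Schwarz: $(MN)^2=\bigl(\sum_x R_u(x)\bigr)^2\le|\operatorname{supp}R_u|\,E_u$. Write $k=|\operatorname{supp}R_u|$. If $k<p/2$, then $E_u>2(MN)^2/p$, so $E_u-(MN)^2/p>(MN)^2/p$.

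Every summand $E_u-(MN)^2/p$ is nonnegative, again by Cauchy--Schwarz, since $k\le p$. Therefore, if $B$ denotes the number of bad slopes, \eqref{eq:energy2} gives
\[
B\,(MN)^2/p<MN(p-M)(p-N)/p,
\]
and hence $B<(p-M)(p-N)/(MN)$.

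One edge case needs a remark. The inequality is strict only when $B\ge1$. If $B=0$, the bound $0<(p-M)(p-N)/(MN)$ still holds, because $M,N<p$ makes the right side positive. So "fewer than" is valid in all cases.

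The main obstacle is simply keeping the case analysis in the coincidence count exact: checking that the hypotheses $M,N<p$ rule out degenerate congruences, and that $u=0$ is excluded precisely for the pairs with $b=b'$. Everything else is direct.
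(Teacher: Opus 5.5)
Your proposal is correct and follows the paper's proof step for step. You classify ordered pairs by which coordinates agree to obtain \eqref{eq:energy1}, do the same algebra for \eqref{eq:energy2}, and use the same Cauchy--Schwarz bound both for nonnegativity of the summands and for the exceptional-slope count; your remark on the case $B=0$ just makes explicit a detail the paper leaves implicit.
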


\begin{proof}
An identical ordered pair of grid points contributes $p-1$ slopes. If just one coordinate
agrees, it contributes no nonzero slope. If both coordinates differ, it contributes exactly
one nonzero slope. This proves \eqref{eq:energy1}; algebra gives \eqref{eq:energy2}. Every
summand in \eqref{eq:energy2} is nonnegative by Cauchy--Schwarz. If the support has size
below $p/2$, then $E_u>2(MN)^2/p$, so the corresponding summand exceeds $(MN)^2/p$. The
claimed exceptional count follows.
\end{proof}

\begin{lemma}[Full coverage]\label{lem:full}
Suppose $S,C\ge 2$ are integers, $p$ is an odd prime, and $p>\max(C,S+1)$. Except for
$O(p^2/(CS))$ slopes $u\in\mathbb F_p^\times$,
\[
  \{cu+b\pmod p:1\le c\le C,\ 0\le b\le S\}=\mathbb F_p.
\]
The implied constant is absolute.
\end{lemma}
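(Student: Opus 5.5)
The plan is to use the ``half-scale sumset'' announced in the introduction: cover half of $\mathbb F_p$ with the rectangular energy bound of Lemma~\ref{lem:energy} at half the scale, then add two such half-grids together and invoke the pigeonhole fact that two subsets of $\mathbb F_p$ each of size greater than $p/2$ have sumset equal to $\mathbb F_p$.

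\textbf{Setting up the half-grids.} Put $M=\lfloor C/2\rfloor\ge 1$ and $N=\lfloor (S+1)/2\rfloor\ge 1$. Then $M,N<p$, since $p>C$ and $p>S+1$. For a slope $u$ define
\[
A_u=\{cu+b \bmod p: 1\le c\le M,\ 0\le b<N\}=\operatorname{supp}R_u .
\]
Any $x=c_1u+b_1$ and $y=c_2u+b_2$ in $A_u$ give
\[
x+y=(c_1+c_2)u+(b_1+b_2),
\]
with $2\le c_1+c_2\le 2M\le C$ and $0\le b_1+b_2\le 2N-2\le S$. Hence $A_u+A_u$ is contained in the target set $\{cu+b:1\le c\le C,\ 0\le b\le S\}$.

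\textbf{The sumset step.} If $|A_u|>p/2$, then for each $z\in\mathbb F_p$ the sets $A_u$ and $z-A_u$ have total size greater than $p$. They therefore intersect, so $z\in A_u+A_u$ and the target set is all of $\mathbb F_p$.

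The same conclusion holds when $|A_u|=p/2$ exactly. Since $p$ is odd, $p/2$ is not an integer, so $|A_u|\ge p/2$ already forces $|A_u|>p/2$. This is the only place the hypothesis that $p$ is odd is needed.

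\textbf{Counting exceptional slopes.} A slope can fail only if $|\operatorname{supp}R_u|<p/2$. By Lemma~\ref{lem:energy}, fewer than $(p-M)(p-N)/(MN)\le p^2/(MN)$ slopes have this property. Since $C\ge 2$ and $S\ge 2$, we have $M\ge C/3$ and $N\ge (S+1)/3\ge S/3$. The exceptional count is therefore at most $9p^2/(CS)$, with absolute constant $9$.

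\textbf{Main obstacle.} The only delicate point is the floor bookkeeping. The halved ranges must satisfy $2M\le C$ and $2N-2\le S$ so that sums stay inside the target, while $M$ and $N$ remain at least a fixed fraction of $C$ and $S$ so that the constant in the exceptional count is absolute. The choices $M=\lfloor C/2\rfloor$ and $N=\lfloor(S+1)/2\rfloor$ meet both requirements, and the case $C,S\ge 2$ ensures neither is zero.
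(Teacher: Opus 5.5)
Your proof is correct and follows the paper's own argument: take the half-scale grid with $M=\lfloor C/2\rfloor$, use the energy lemma to get support at least $p/2$ (hence $>p/2$, since $p$ is odd) outside $O(p^2/(MN))$ slopes, and conclude $A+A=\mathbb F_p$ by pigeonhole. The only difference is cosmetic: the paper takes $N=\lfloor S/2\rfloor+1$ where you take $N=\lfloor (S+1)/2\rfloor$, and both choices satisfy $2N-2\le S$ and $N\gg S$.
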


\begin{proof}
Apply Lemma~\ref{lem:energy} with $M=\lfloor C/2\rfloor$ and $N=\lfloor S/2\rfloor+1$.
Outside at most $p^2/(MN)\ll p^2/(CS)$ exceptional slopes, the associated support $A$ has
size at least $p/2$, hence strictly greater than $p/2$ because $p$ is odd. For every
$z\in\mathbb F_p$, the sets $A$ and $z-A$ intersect, so $A+A=\mathbb F_p$. Each sum has
coefficient $2\le c\le 2M\le C$ and $0\le b\le 2N-2\le S$.
\end{proof}

In particular, taking $C=S=s$ already upgrades the half-residue estimate of a square window
to full coverage of $\mathbb F_p$. The argument below uses a rectangular window to
accommodate larger $q$.

\subsection{Admissible pairs}

Fix a compact interval $J\subset(1/3,1)$. Throughout this subsection
\[
  \ell=\log t, \qquad \kappa=\frac{\log s}{\log t}\in J,
\]
where $s$ is an integer, and all error bounds are uniform for $\kappa\in J$. Set
\begin{equation}\label{eq:windows}
  L_0=\max(2s,4t/s), \qquad
  P_0=\frac{\sqrt{ts}}{\ell^4}, \qquad
  \mathcal H=\{p\text{ prime}:L_0<p\le P_0\}.
\end{equation}
For each $p\in\mathcal H$, put
\begin{equation}\label{eq:B}
  B(p)=\max\Bigl(2s,\frac{p^2\ell^4}{s^2}\Bigr).
\end{equation}
An admissible pair is $p\in\mathcal H$ and a prime $B(p)<q<p$. The interval $\mathcal H$
has positive logarithmic length for large $t$, since
\[
  \max(\kappa,1-\kappa)<\frac{1+\kappa}{2}\qquad (1/3<\kappa<1).
\]
Moreover $B(p)\le L_0$, because $p^2\ell^4/s^2\le t/(s\ell^4)<4t/s$. Both primes of an
admissible pair exceed $s+1$, and every such $p$ is odd.

\begin{proposition}\label{prop:bad}
Apart from a set of integers $r$ of density $O(\ell^{-3})$, every $r$ divisible by an
admissible pair belongs to the event defining $U_2^{\mathrm{pr}}(t,s)$.
\end{proposition}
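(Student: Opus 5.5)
The plan is a union bound over admissible pairs. For each pair we reduce the event \eqref{eq:twoprime-event} to a residue-coverage statement modulo $p$ with slope $u\equiv q$, and we control the exceptional slopes with Lemma~\ref{lem:full}. The coefficient of $q$ will be capped according to the dyadic size of $q$.

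\emph{Step 1: reduction to coverage modulo $p$.} Fix an admissible pair $(p,q)$ and let $q\in(Q,2Q]$ with $Q$ a power of $2$. Put $C=C(Q)=\min\bigl(s,\lfloor t/(4Q)\rfloor\bigr)$ and $S=s$. Suppose $t\bmod p$ lies in $\{cq+b\bmod p:1\le c\le C,\ 0\le b\le s\}$, and choose such $c,b$. Then $a:=(t-b-cq)/p$ is an integer.

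\begin{itemize}
\item \emph{Lower bound on $a$.} Since $cq\le t/2$, $b\le s$, and $p\le P_0<t/4$ for large $t$, we get $t-b-cq\ge p$. Hence $a\ge1$.
\item \emph{Upper bound on $a$.} We have $a\le t/p<s/4$, because $p>L_0\ge4t/s$.
\end{itemize}

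So the triple $(a,b,c)$ satisfies \eqref{eq:twoprime-event}, and $r$ lies in the event whenever $p,q\mid r$.

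Next we check the hypotheses of Lemma~\ref{lem:full}.
\begin{itemize}
\item $C\ge2$ holds because $t/(4Q)\ge t/(4p)\ge t/(4P_0)\to\infty$.
\item $p>\max(C,s+1)$ holds because $C\le s$ and $p>L_0\ge 2s$.
\item $p$ is odd.
\end{itemize}
Since $q<p$, the slope $u\equiv q$ determines $q$ itself. Therefore, for each pair $(p,Q)$ there is a set $E_{p,Q}$ of integers $q$, with $|E_{p,Q}|\ll p^2/(Cs)$, outside which the pair $(p,q)$ is good.

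\emph{Step 2: density of the bad set.} Divisibility of $r$ by $pq$ has density $1/(pq)$, so the bad set has density at most
\[
\sum_{p\in\mathcal H}\frac1p\sum_{\substack{Q\ \text{dyadic}\\ B(p)/2<Q<p}}\ \sum_{\substack{q\in E_{p,Q}\\ Q<q\le 2Q}}\frac1q
\;\ll\;\sum_{p\in\mathcal H}\frac1p\sum_Q\frac{p^2}{C(Q)\,s\,Q}.
\]
We split the $Q$-sum according to which term attains the minimum in $C(Q)$.

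\begin{itemize}
\item \emph{Blocks with $C=s$.} The terms $p^2/(s^2Q)$ form a geometric series dominated by the smallest admissible $Q\asymp B(p)$. The inner sum is therefore $\ll p^2/(s^2B(p))\le\ell^{-4}$, using $B(p)\ge p^2\ell^4/s^2$. Summing $\ell^{-4}/p$ over $p\le P_0$ gives $\ll\ell^{-4}\log\log t\ll\ell^{-3}$.
\item \emph{Blocks with $C\asymp t/Q$.} Each term is $\ll p^2/(ts)$, and there are $O(\ell)$ dyadic blocks. The contribution is $\ll\ell\sum_{p\le P_0}p/(ts)\ll \ell P_0^2/(ts)=\ell^{-7}$.
\end{itemize}

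Both contributions are $O(\ell^{-3})$, uniformly for $\kappa\in J$, since all inequalities used depend only on the window definitions \eqref{eq:windows} and \eqref{eq:B}.

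\emph{Main obstacle.} The delicate point is choosing the cap $C$ on the coefficient of $q$. A uniform $C=s$ breaks positivity of $a$, because $sq$ can exceed $t$. A uniform $C\asymp t/p$ loses too much: it leaves a per-$p$ bad mass of order $s/(t\ell^4)$, which does not sum to something small over $p\le P_0$ once $\kappa>1/3$. The dyadic dependence $C\asymp\min(s,t/q)$ is exactly what makes the exceptional-slope count $p^2/(Cs)$, weighted by $1/q$, telescope. The lower cutoff $B(p)$ is what makes the $C=s$ range cost only $\ell^{-4}/p$, so I expect verifying this bookkeeping, including the endpoint block near $B(p)$, to be the main work.
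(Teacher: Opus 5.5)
Your proposal is correct and follows the paper's proof essentially step for step: dyadic blocks in $q$, a cap $\min(s,\,t/(\mathrm{const}\cdot Q))$ on the coefficient of $q$, Lemma~\ref{lem:full} with $S=s$, injectivity of $q\mapsto q\bmod p$ for primes $q<p$, the same verification $1\le a<t/p<s/4$, and a finite union bound over bad pairs. The differences are cosmetic. You cap at $t/(4Q)$ where the paper uses $t/(8Q)$. You also sum the $C=s$ blocks as a geometric series, getting $\ell^{-4}$ per $p$ and then absorbing a $\log\log t$ loss from $\sum_{p\le P_0}1/p$; the paper instead bounds each of the $O(\ell)$ blocks by $\ell^{-4}+\ell^{-8}$ and uses Mertens to get $\sum_{p\in\mathcal H}1/p=O(1)$.
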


\begin{proof}
For fixed $p$, split $B(p)<q<p$ into intersections with dyadic intervals $Q<q\le 2Q$. There
are $O(\ell)$ intervals. Let $C_Q=\min(s,t/(8Q))$. This tends to infinity uniformly on $J$,
since $Q<p\le P_0$ and $\kappa$ is bounded away from $1$. Apply Lemma~\ref{lem:full} with
coefficient limit $\lfloor C_Q\rfloor$ and $S=s$. The exceptional count is
$O(p^2/(s C_Q))$. Distinct primes $q<p$ occupy distinct nonzero residues modulo $p$, so
their exceptional reciprocal mass in this interval is
\[
  \sum_{\substack{Q<q\le 2Q\\ q\text{ exceptional}}}\frac1q
  \;\ll\; \frac{p^2}{s C_Q Q}
  \;\ll\; \frac{p^2}{s^2 Q}+\frac{p^2}{st}
  \;\ll\; \ell^{-4}+\ell^{-8},
\]
using $Q>B(p)/2$ and $p\le P_0$. Thus the bad reciprocal mass for a fixed $p$ is
$O(\ell^{-3})$.

For every nonexceptional $q$, full coverage gives integers $1\le c\le C_Q$ and $0\le b\le s$
with $t\equiv cq+b\pmod p$. They satisfy $cq\le t/4$ and $cq+b\le t/4+s<t$ for large $t$.
Hence
\[
  a=\frac{t-cq-b}{p}\in\mathbb Z, \qquad 1\le a<\frac tp<\frac s4,
\]
which is a valid representation \eqref{eq:twoprime-event}.

Finally, the density of integers divisible by any bad admissible pair is, by a finite union
bound,
\[
  \le\sum_{p\in\mathcal H}\frac1p\sum_{q\text{ bad for }p}\frac1q
  \;\ll\; \ell^{-3}\sum_{p\in\mathcal H}\frac1p \;\ll\; \ell^{-3}.
\]
Mertens' theorem bounds the last prime reciprocal sum uniformly. No distribution theorem
for primes in arithmetic progressions is used.
\end{proof}

\subsection{An exact finite divisibility probability}

Put
\[
  A=\prod_{p\in\mathcal H}\Bigl(1-\frac1p\Bigr), \qquad
  V_p=\prod_{B(p)<q\le L_0}\Bigl(1-\frac1q\Bigr),
\]
with prime indices. Let $\mathcal P(t,s)$ be the density of integers divisible by at least
one admissible pair.

\begin{lemma}\label{lem:prob}
\begin{equation}\label{eq:prob}
  \mathcal P(t,s)=1-A\Bigl(1+\sum_{p\in\mathcal H}\frac{V_p}{p-1}\Bigr).
\end{equation}
Consequently $U_2^{\mathrm{pr}}(t,s)\ge\mathcal P(t,s)-O(\ell^{-3})$ uniformly on $J$.
\end{lemma}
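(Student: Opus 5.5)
We compute the complementary probability: the density of integers $r$ divisible by \emph{no} admissible pair. The event depends only on divisibility of $r$ by the finitely many primes in $\mathcal H\cup(\max(2s,\cdot),L_0]$, all of which lie in $(2s,P_0]$. By the Chinese remainder theorem, these divisibilities are independent Bernoulli events with probabilities $1/p$, so the density equals a finite probability in a product space. Admissible pairs are ordered, $q<p$ with $p\in\mathcal H$ and $q\in(B(p),p)$. Since $B(p)\le L_0<p$, the $q$-range splits as $(B(p),L_0]\cup(L_0,p)$, the latter part lying inside $\mathcal H$.

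\emph{Step 1 (decomposition by the set of $\mathcal H$-primes).} Let $D$ be the set of primes of $\mathcal H$ dividing $r$. I split on $|D|$.

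If $|D|=0$, there is no admissible pair, since $p$ must lie in $\mathcal H$. This case has probability $A$.

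If $|D|\ge 2$, take $p>q$ both in $D$. Then $q>L_0\ge B(p)$, so $(p,q)$ is admissible. Hence these $r$ are always covered.

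If $D=\{p\}$, the only candidate pairs have first coordinate $p$ and $q\in(B(p),L_0]$, because no other prime of $\mathcal H$ divides $r$. Also $p$ cannot serve as the smaller prime, as that would require a larger prime of $\mathcal H$. So $r$ is uncovered exactly when no prime in $(B(p),L_0]$ divides $r$. These primes are disjoint from $\mathcal H$, so by independence this has probability $V_p$. The probability that $D=\{p\}$ is $\frac1p\prod_{p'\in\mathcal H,p'\ne p}(1-1/p')=A/(p-1)$.

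Summing the uncovered probabilities gives $A(1+\sum_p V_p/(p-1))$, which yields \eqref{eq:prob}.

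\emph{Step 2 (consequence).} By Proposition~\ref{prop:bad}, the set of $r$ divisible by an admissible pair, minus an exceptional set of density $O(\ell^{-3})$, lies in the event defining $U_2^{\mathrm{pr}}(t,s)$. Every set here is periodic, so densities exist and are finitely subadditive. Therefore $U_2^{\mathrm{pr}}(t,s)\ge\mathcal P(t,s)-O(\ell^{-3})$, with uniformity on $J$ inherited from the proposition.

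The only delicate point is the case $|D|=1$. There one must check that $p$ cannot pair with a smaller $q\in\mathcal H$, which holds because $D=\{p\}$, and that $B(p)\le L_0$, so that the relevant $q$-range is exactly $(B(p),L_0]$ and is disjoint from $\mathcal H$; this is what gives independence. Both facts were recorded before Proposition~\ref{prop:bad}.
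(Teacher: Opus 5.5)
Your proposal is correct and follows the paper's proof essentially step for step. Both arguments split according to whether $r$ has zero, one, or at least two prime divisors in $\mathcal H$; use $B(p)\le L_0$ to see that two such primes always form an admissible pair and that a lone $p\in\mathcal H$ can only pair with some $q\in(B(p),L_0]$; obtain the uncovered masses $A$ and $AV_p/(p-1)$ from Chinese-remainder independence; and deduce the final inequality from Proposition~\ref{prop:bad}.
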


\begin{proof}
An integer with at least two distinct prime divisors in $\mathcal H$ has an admissible pair,
since $B(p)\le L_0$. An integer with exactly one such prime divisor $p$ has an admissible
pair precisely when it has some prime divisor in $(B(p),L_0]$. The complement therefore
consists of: no prime divisor in $\mathcal H$, of density $A$; and exactly one such prime
$p$ with no prime divisor in $(B(p),L_0]$, of density $A V_p/(p-1)$. These cases are
disjoint, and finite prime divisibility conditions are independent by the Chinese remainder
theorem. Proposition~\ref{prop:bad} gives the last claim.
\end{proof}

\subsection{The limiting profile}

For $0\le w\le 2$, write $h(w)=(w+2)/2$ and $M(w)=\max(1,w)$, and define
\begin{equation}\label{eq:G}
  G(w)=1-\frac{M(w)+\displaystyle\int_{M(w)}^{h(w)}\frac{\max(1,2x-2)}{x}\,\mathrm{d}x}{h(w)}.
\end{equation}
Here $1\le M(w)\le h(w)\le 2$. On the interval of integration, $\max(1,2x-2)\le x$, so
$G\ge 0$, with strict positivity on $(0,2)$.

\begin{proposition}\label{prop:profile}
Uniformly for $\kappa\in J$,
\[
  U_2^{\mathrm{pr}}(t,s) \;\ge\; G\Bigl(\frac{1-\kappa}{\kappa}\Bigr)-o(1).
\]
\end{proposition}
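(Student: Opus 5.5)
The plan is to evaluate the exact formula \eqref{eq:prob} of Lemma~\ref{lem:prob} asymptotically using Mertens' theorem, and then invoke the last claim of that lemma, $U_2^{\mathrm{pr}}(t,s)\ge\mathcal P(t,s)-O(\ell^{-3})$. Throughout we measure logarithms in units of $\log s=\kappa\ell$ and put $w=(1-\kappa)/\kappa$, so that $\log t=(1+w)\log s$. As $\kappa$ ranges over the compact $J\subset(1/3,1)$, $w$ ranges over a compact subset of $(0,2)$, and every ratio below is uniformly bounded.

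\emph{Step 1: the endpoints in log scale.} We have $\log L_0=\max(\log s,\,\log t-\log s)+O(1)=M(w)\log s+O(1)$. Next, $\log P_0=\tfrac12(\log t+\log s)-4\log\ell=h(w)\log s+O(\log\ell)$. For a prime $p=s^{x}$ in $\mathcal H$, $\log B(p)=\max(\log s,\,(2x-2)\log s)+O(\log\ell)$, that is, $\max(1,2x-2)\log s+O(\log\ell)$.

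\emph{Step 2: Mertens.} In the form $\prod_{y<p\le z}(1-1/p)=\frac{\log y}{\log z}\bigl(1+O(1/\log y)\bigr)$, Mertens gives
\[
A=\frac{M(w)}{h(w)}+O\Bigl(\frac{\log\ell}{\ell}\Bigr),\qquad V_p=\frac{\max(1,2x-2)}{M(w)}+O\Bigl(\frac{\log\ell}{\ell}\Bigr)\quad(p=s^x).
\]
Both estimates are uniform because $B(p)\ge 2s$ and all logarithms are $\asymp\ell$.

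\emph{Step 3: the sum over $p$.} We evaluate $\sum_{p\in\mathcal H}V_p/(p-1)$ by partial summation against $\sum_{p\le y}1/p=\log\log y+b+O(1/\log y)$. The measure $\sum 1/p$ becomes $\mathrm{d}x/x$ in the variable $x=\log p/\log s$, and $V_p$ is a Lipschitz function of $x$ up to the $O(\log\ell/\ell)$ error. This yields
\[
\sum_{p\in\mathcal H}\frac{V_p}{p-1}=\frac1{M(w)}\int_{M(w)}^{h(w)}\frac{\max(1,2x-2)}{x}\,\mathrm{d}x+O\Bigl(\frac{\log\ell}{\ell}\Bigr);
\]
replacing $p-1$ by $p$ costs only $O(1/L_0)$. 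Multiplying by $A$, the factors $M(w)$ cancel, so
\[
\mathcal P(t,s)=1-\frac{M(w)+\int_{M(w)}^{h(w)}\max(1,2x-2)\,x^{-1}\,\mathrm{d}x}{h(w)}+o(1)=G(w)+o(1).
\]
The error is uniform on $J$, since $w$ stays in a compact set and $G$ is continuous.

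\emph{Step 4: conclusion.} Combining this with $U_2^{\mathrm{pr}}(t,s)\ge\mathcal P(t,s)-O(\ell^{-3})$ from Lemma~\ref{lem:prob} gives the proposition.

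The main obstacle is Step 3. The integrand involves $B(p)$, which switches between the branches $2s$ and $p^2\ell^4/s^2$ at $x\approx 3/2$ (up to the $\ell^4$ shift). The window endpoints $L_0$ and $P_0$ also carry $\ell^{\pm4}$ factors. I would treat this by splitting $\mathcal H$ at the switching point. On each piece I would apply Mertens with explicit $O(\log\ell/\ell)$ errors, and I would check that the $\ell^4$ perturbations move the endpoints by only $O(\log\ell/\ell)$ in $x$, which costs $o(1)$ in the integral.
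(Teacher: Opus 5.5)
Your proposal is correct and follows the paper's proof essentially step for step: Mertens' product estimate gives $A$ and $V_p$, partial summation against Mertens' reciprocal-sum estimate handles $\sum_{p\in\mathcal H}V_p/(p-1)$, and Lemma~\ref{lem:prob} finishes the argument. The only difference is cosmetic: you measure logarithms in units of $\log s$ rather than $\log t$, so you arrive at $G(w)$ directly without the paper's final substitution $x=u/\kappa$.
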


\begin{proof}
Write $a=\max(\kappa,1-\kappa)$ and $b=(1+\kappa)/2$. The endpoints in \eqref{eq:windows}
have logarithmic exponents $a+o(1)$ and $b+o(1)$. Mertens' prime-product estimate gives
$A=a/b+o(1)$. For $p\in\mathcal H$, put $u=\log p/\log t$. Since all lower endpoints are
positive powers of $t$, uniformly on $J$,
\[
  V_p=\frac{\max(\kappa,2u-2\kappa)}{a}+o(1).
\]
Constants in the endpoints contribute $O(1/\ell)$ to these exponents, and the logarithmic
factors contribute $O(\log\ell/\ell)$. Replacing $1/(p-1)$ by $1/p$ costs $o(1)$. Mertens'
reciprocal-sum estimate and partial summation now imply
\[
  \sum_{p\in\mathcal H}\frac{V_p}{p-1}
  =\frac1a\int_a^b\frac{\max(\kappa,2u-2\kappa)}{u}\,\mathrm{d}u+o(1),
\]
uniformly because the piecewise-smooth integrands have uniformly bounded variation on these
compact ranges. Insert these estimates into \eqref{eq:prob} and set
$w=(1-\kappa)/\kappa$, $x=u/\kappa$. Then $a/\kappa=M(w)$ and $b/\kappa=h(w)$, giving
exactly $G(w)$. Apply Lemma~\ref{lem:prob}.
\end{proof}

\subsection{Summing over practical components}

For fixed $0<\alpha<\beta<1$, the finite-complement formula \eqref{eq:tail} rearranges to
$\sum_{y\le\sigma(m)<z}\alpha_m=\nu_y-\nu_z$ for integers $y<z$. Combined with
\eqref{eq:lower} this yields
\begin{equation}\label{eq:mass}
  \sum_{t^\alpha\le\sigma(m)<t^\beta}\alpha_m
  =\frac{K}{\log t}\Bigl(\frac1\alpha-\frac1\beta\Bigr)
  +O\!\Bigl(\frac{\log\log t}{(\log t)^2}\Bigr).
\end{equation}
Consequently, for every continuous $F$ on a fixed compact interval $J\subset(1/3,1)$,
\begin{equation}\label{eq:measure}
  (\log t)\sum_{\log\sigma(m)/\log t\in J}\alpha_m
  F\Bigl(\frac{\log\sigma(m)}{\log t}\Bigr)
  \longrightarrow K\int_J F(\kappa)\,\frac{\mathrm{d}\kappa}{\kappa^2}.
\end{equation}
This follows from a finite partition of $J$, applying \eqref{eq:mass} to each half-open
cell, and squeezing with upper and lower step functions. Endpoint masses tend to zero after
multiplication by $\log t$, also by \eqref{eq:mass} on shrinking fixed neighbourhoods. No
countable additivity of natural density is used.

\begin{proof}[Proof of Theorem~\ref{thm:pointwise}]
Apply \eqref{eq:reduction}, restrict to components with exponent in $J$, and use
Proposition~\ref{prop:profile}. The uniform $o(1)$ costs $o(1)$ after multiplication by
$\log t$, because the total component mass on $J$ is $O(1/\log t)$. Formula
\eqref{eq:measure} gives
\[
  \liminf_{t\to\infty}(\log t)\,(d_t-\nu_t)
  \;\ge\; K\int_J G\Bigl(\frac{1-\kappa}{\kappa}\Bigr)\frac{\mathrm{d}\kappa}{\kappa^2}.
\]
Exhaust $(1/3,1)$ by compact intervals. Nonnegativity of $G$ permits monotone convergence.
The substitution $w=1/\kappa-1$, with $\mathrm{d}w=-\mathrm{d}\kappa/\kappa^2$, gives the
lower bound $KI$ with
\begin{equation}\label{eq:I}
  I=\int_0^2 G(w)\,\mathrm{d}w.
\end{equation}
Finally $(\log t)\nu_t\to K$ by \eqref{eq:lower}. The decimal consequence is justified in
the next section.
\end{proof}

\begin{proof}[Proof of Corollary~\ref{cor:average}]
Theorem~\ref{thm:pointwise} supplies $\eta>0$ such that $(\log t)\,(d_t-\nu_t)\ge\eta$ for
all large $t$. Averaging over $[X,2X)$ gives the dyadic bound with $\beta_0=\eta/2$, and
the exceptional set in that interval is empty for large $X$.
\end{proof}

\begin{proof}[Proof of Corollary~\ref{cor:refuted-constant}]
By Corollary~\ref{cor:exponent}, $\delta<c_4\le 1$. If $c_4=1$ then $(\log t)\,d_t$
converges, say to $c_3$, and Theorem~\ref{thm:pointwise} forces $c_3\ge K(1+I)>0.79$.
\end{proof}

\section{The constant}\label{sec:constant}

The function \eqref{eq:G} is explicitly
\begin{equation}\label{eq:pieces}
G(w)=
\begin{cases}
1-\dfrac{2}{w+2}\left(1+\log\dfrac{w+2}{2}\right),&0\le w\le 1,\\[10pt]
\dfrac{4-3w-2\log(3/(2w))+4\log((w+2)/3)}{w+2},&1\le w\le 3/2,\\[10pt]
\dfrac{w-2+4\log((w+2)/(2w))}{w+2},&3/2\le w\le 2.
\end{cases}
\end{equation}
The three formulas agree at both junctions. The first piece integrates in closed form to
$1-2\log(3/2)-\log^2(3/2)$. High-precision quadrature gives
$I=0.058879779905129\ldots$.

Weingartner \cite[eq.~(17)]{We20} records $1.33607322<c<1.33607654$, whence
$K(1+I)\approx 0.79432$. For the stated decimal theorem it suffices to certify $I>4/75$,
since $K>3/4$ follows from $c>1.33607322>(3/4)e^{\gamma}$. Split each of the three
intervals in \eqref{eq:pieces} into $1000$ equal rational subintervals. For every logarithm
of a rational $x\ge 1$, the series
\[
  2\sum_{j=0}^{11}\frac{z^{2j+1}}{2j+1}
  \;\le\; \log x \;\le\;
  2\sum_{j=0}^{11}\frac{z^{2j+1}}{2j+1}+\frac{2z^{25}}{25(1-z^2)},
  \qquad z=\frac{x-1}{x+1},
\]
gives rational bounds. On each cell $[a,b]$ a lower bound for $G$ follows by monotonicity
of each displayed elementary term (and $G\ge 0$). Rounding each enclosed bound down to
$12$ decimal places in exact rational arithmetic and summing against the cell width yields
\[
  I \;\ge\; \frac{116470605567529}{2000000000000000}
  = 0.0582353027837645 > \frac4{75}.
\]
Hence $K(1+I)>(3/4)(1+4/75)=79/100$, which yields $d_t\ge 0.79/\log t$ for all
sufficiently large $t$ by the definition of liminf.

Erd\H{o}s's question \eqref{eq:erdos-conj} remains open. The constraints are now
$\delta<c_4\le 1$, and if $c_4=1$ then the constant exceeds $0.79$. No $O(1/\log t)$ upper
bound, no limiting constant, and no numerical threshold for ``sufficiently large'' are
asserted.

\end{document}